\newtheorem{theorem}{Theorem}
\newtheorem{remark}[theorem]{Remark}
\newtheorem{corollary}[theorem]{Corollary}
\newenvironment{proof}{\noindent{\bfseries Proof.\/}}{}
\newcommand{\real}{\mathbbm{R}}
\newcommand{\naturalnumbers}{\mathbbm{N}}
\renewcommand{\hat}{\widehat}
\renewcommand{\tilde}{\widetilde}
\renewcommand{\bar}{\overline}
\newcommand{\yfast}[1][undefined]{y_{F\ifthenelse{\equal{#1}{undefined}}{}{,\,#1}}}
\def\dyfast{\dot{y}_{F\hspace*{-0.1mm}}}
\def\tyfast{\tilde{y}_{F\hspace*{-0.1mm}}}
\def\hyfast{\hat{y}_{F\hspace*{-0.1mm}}}
\newcommand{\yslow}[1][undefined]{y_{S\ifthenelse{\equal{#1}{undefined}}{}{,\,#1}}}
\def\dyslow{\dot{y}_S}
\def\tyslow{\tilde{y}_S}
\def\hyslow{\hat{y}_S}
\def\ffast{f_F}
\def\fslow{f_{S\hspace*{0.2mm}}}
\def\myend{\text{\scriptsize end}}
\def\eps{\ifmmode{\varepsilon} \else{\varepsilon}\fi }
\def\one{{\ifmmode{{\text{1}}\mkern-4.0mu{\rm l}\mkern1.0mu\mbox{}}
          \else\leavevmode\hbox{$\text{\rm 1}\mkern-4.0mu{\rm                 
           l}\mkern4.0mu\mbox{}$}\fi}}
\begin{document}

\title{Inter/extrapolation-based multirate schemes --- a dynamic-iteration perspective\protect\thanks{The authors are indebted to the EU project ROMSOC (EID). 
}}

\author{Andreas Bartel and  Michael G\"unther \\[1ex]
            \small  University of Wuppertal\\[-0.75ex]
            \small  Faculty of Mathematics and natural sciences \\[-0.75ex]
           \small   IMACM\\[-0.75ex]
	\small40097 Wuppertal (Germany)  \\[-0.75ex]
           \small  {\{bartel,guenther\}@uni-wuppertal.de}           %
}


\maketitle

{\bfseries Abstract. }
Multirate behavior of ordinary differential equations (ODEs) and different\-ial-alge\-braic equations (DAEs) is characterized by 
widely separated time constants in different components of the solution or different additive terms of the right-hand side.
Here, classical multirate schemes are dedicated solvers, which apply (e.g.) micro and  macro steps to resolve fast and slow changes in a transient simulation accordingly.
The use of extrapolation and interpolation procedures is a genuine way for coupling the different parts, which are  defined on different time grids. \\
This paper contains for the first time, to the best knowledge of the authors, a complete convergence theory for inter/extrapolation-based multirate schemes for both ODEs and DAEs of index one, which are based on the fully-decoupled approach, the slowest-first and the fastest-first approach. 
The convergence theory is based on linking these schemes to multirate dynamic iteration schemes, i.e., dynamic iteration schemes without further iterations. This link defines  naturally stability conditions for the DAE case.

{\bfseries Keywords: \/} 
ODEs $\;\cdot\;$ DAEs $\;\cdot\;$ Multirate schemes $\;\cdot\;$ Convergence theory

\section{Introduction}
\label{intro}
In practice, technical applications are often modeled as coupled systems of ordinary differential equations (ODEs) or differential algebraic equations (DAEs). Furthermore,  it is a very common aspect of technical applications that the transient behavior is characterized by different time constants. At a given instance of time, certain parts of a dynamical system are  slowly evolving, while others have a fast dynamics in the direct comparison. Here, this is referred to 
\emph{multirate behavior}. To name but a few applications:  multibody systems~\cite{Arnold_2007,Eich1998}, electric circuits~\cite{El_Guennouni_2007,GuentherFeldmannMaten2005}, climate models~\cite{Stocker2011} 
and, of course, multiphysical systems, e.g. field/circuit coupling~\cite{Schoeps_2010a}.
\color{black}
Now, to have an efficient numerical treatment of systems with multirate behavior, special integration schemes are developed, so-called multirate schemes. 
To the best knowledge of the authors, the multirate history goes back to Rice~\cite{Rice60} in 1960, where step sizes for time integration are adapted to the activity level of subsystems.
Many work followed, and we give only a partial list here: %
based on BDF-methods~\cite{Gear_1984}, 
based on ROW methods~\cite{GuentherRentrop_1992}, 
based on extrapolation methods~\cite{Engstler_1997}
partitioned RK and compound step~\cite{Guenther_2001}, 
mixed multirate with ROW~\cite{Bartel_2001}, 
based on a refinement strategy~\cite{Savcenco_2007}, 
for conservation laws~\cite{Constantinescu2007},
compound-fast~\cite{Verhoeven_2008},
infinitesimal step~\cite{Wensch_2009},
implicit-explicit~\cite{Constantinescu_2010},
based on GARK-methods~\cite{GuentherSandu_2015}.

The fundamental idea of a multirate scheme is the following: an efficient algorithm should (if there are no stability issues) sample a certain component/subsystem according to the activity level. The more active a component is, the shorter are the time scales and the higher the sampling rate should be chosen to achieve a given level of accuracy. In other words, there is not a global time step, but a local one, which should reflect the inherent time scale of an unknown or some subsystem. For simplicity, we work here with only two time scales. That is, we allow for an fast subsystem (of higher dynamics), which employs a small step of size $h$ (\emph{micro step}) and a slow subsystem, which employs a larger step size $H$ (\emph{macro step}). 
Furthermore, we assume for simplicity the relation $H=m h$ with $m \in \naturalnumbers$.
In fact, the main feature of a certain multirate scheme is to define the coupling variables in an appropriate way. Here we focus on inter- and extrapolation strategies for coupling both subsystems, since we aim at highlighting the connection to dynamic iteration schemes.

The work is structured as follows:  In Sec~\ref{sec:notation}, the formulation of multirate initial value problems is given on the basis of ordinary differential equations (ODEs). Furthermore, various known versions of extra- and interpolation coupling is explained. Following this, the consistency of multirate one-step methods are discussed for ODEs (Sec.~\ref{sec:ODE}). Then, in Sec.~\ref{sec:dae}, the ODE results are generalized to the DAE case. Conclusions complete the presentation. 

\section{Notation for coupled systems and multirate extra/interpolation}
\label{sec:notation}
We start from an initial value problem (IVP) based on a model of ordinary differential equations (ODEs):
\begin{equation}
\label{ivp.ode.original}
\dot w = h(t,w), \qquad w(t_0)=w_0, \qquad 
t\in (t_0,\, t_{\myend}],
\end{equation}
where $h$ is continuous and Lipschitz continuous in $w$, $w_0 \in \real^n$ is given. Moreover, let $h$ or $w$, resp., be comprised of some slower changing parts (in time domain), whereas the remaining parts are faster changing. This is referred to as multirate behavior 
Now, there are two equivalent ways of partitioning:
\begin{enumerate}[label=\alph*)]
    \item The \textit{component-wise partitioning} splits the unknown into slow $\yslow(t)\in \real^{m}$ and fast components
     $\yfast(t)\in \real^{n-m}$, such that 
    $w^\top = (\yslow^\top,\yfast^\top)$  and 
    \begin{equation}
        \label{ode.part.comp}
        \begin{array}{rlrl}
            \dyslow & =  \fslow(t, \, \yslow, \, \yfast), \qquad & \yslow(t_0) & = \yslow[0],\\
            \dyfast & =  \ffast(t, \, \yslow, \, \yfast), \qquad &  \yfast(t_0) & = \yfast[0],
        \end{array}
    \end{equation}
   with corresponding splitting of the right-hand side.
   
    \item The \textit{right-hand side partitioning} is an additive splitting of $h$ into slow and fast summands:
        \begin{equation}
            \label{ode.part.rhs}
            \dot w = h_s(t, \, w)+h_f(t, \, w), \qquad w(t_0)=w_0,
        \end{equation}
        such that $w=w_s+w_f$ with $\dot{w}_s = h_s(t,\, w_s + w_f)$ and 
        $\dot{w}_f = h_f(t,\, w_s + w_f)$. Of course, the initial data needs to be split in a suitable way. If the dynamics are solely determined by $h_s$ and $h_f$, the splitting is arbitrary to some extent.
\end{enumerate}

\noindent Since both ways of partitioning are equivalent, we choose for the work at hand the formulation \eqref{ode.part.comp}, without loss of generality. Moreover, the partitioning \eqref{ode.part.comp} can be generalized to the case of differential algebraic equations (DAEs) with certain index-1 assumptions. This DAE setting is treated in Sec.~\ref{sec:dae}.

\color{black}

In this work, we study  multirate methods, which belong to the framework of one-step-methods (and multi-step schemes, too, see remark~\ref{remark.also.mr} below) \color{black} and which are based on extrapolation and interpolation for the coupling variables. 
 To describe these methods,  
 let us assume that the computation of the coupled system~\eqref{ode.part.comp} has reached time $t=\bar{t}$ with 
     \begin{equation}
        \label{eq:ode.part.comp.intermediate}
        \begin{array}{rlrl}
            \dyslow & =  \fslow(t, \, \yslow, \, \yfast), \qquad & \yslow(\bar{t}) & = \yslow[\bar{t}],\\
            \dyfast & =  \ffast(t, \, \yslow, \, \yfast), \qquad &  \yfast(\bar{t}) & = \yfast[\bar{t}].
        \end{array}
    \end{equation}
Now, the multirate integration of the whole coupled system is defined for one macro step, i.e., on $[\bar{t},\,\bar{t}+H] \subseteq [t_0,\,t_{\myend}]$. It comprises a single step of macro step size $H$ for the subsystem $\yslow$ and $m\in\naturalnumbers$ steps of (micro step) size $h$ for $\yfast$.
To this end, the respective coupling variables need to be evaluated. Here, our presentation is restricted to extrapolation and interpolation for the coupling variables, although there are several other techniques.
Depending on the sequence of  computation of the unknowns \color{black} $y_S$ and $y_F$, 
one distinguishes the following three versions  of extra-/and interpolation techniques\color{black}:  
 \begin{enumerate}[label=\roman*)]
     \item \emph{fully-decoupled approach~ \cite{Bartel_Guenther_2019}:} fast and slow variables are integrated in parallel using in both cases \color{black} extrapolated waveforms based on information from the 
     initial data of the current macro step at $\bar{t}$;
     \color{black}

     \item \emph{slowest-first approach~\cite{Gear_1984}:} in a first step, the slow variables are integrated, using an extrapolated waveform of $y_F$ based on information  available at $\bar{t}$ \color{black}
    for evaluating the coupling variable $y_F$ in the current macro step. 
    In a second step, $m$ micro steps are performed to integrate the fast variables $y_F$ from $\bar t$ to $\bar t + H$, using an interpolated waveform of $y_S$ based on information from the current macro step size $[\bar t,\bar t+H]$ for evaluating the coupling variable $y_F$.
    
     \item \emph{fastest-first approach~\cite{Gear_1984}:} in a first step, $m$ micro steps are performed to integrate the fast variables, using an extrapolated waveform of $y_S$ based on information 
     available at $\bar{t}$ \color{black}
     for evaluating the coupling variable $y_S$ in the current macro step. 
     In a second step, one macro step is performed to integrate the slow variables $y_S$ from $\bar t$ to $\bar t + H$, using an interpolated waveform of $y_F$ based on information from the current macro step size $[\bar t,\bar t+H]$ for evaluating the coupling variable $y_F$.
 \end{enumerate} 
 
 \begin{remark}\label{remark.end.ch2}
 The restriction that the extrapolation can only be based on the information at $\bar{t}$ can be relaxed to the data of the preceding macro step $[\bar{t}-H,\, \bar{t}]$. In fact, one can encode such an information e.g. as a spline model, which is also updated and transported from macro step to macro step. 
 \end{remark}
 \color{black}
 
 \if 0 
 \begin{enumerate}[label=\roman*)]
     \item \emph{fully-decoupled approach}: fast and slow variables are integrated in parallel, using extrapolated waveforms based on information from the preceding macro step
     $[\bar t -H,\bar{t}]$.
     %
     %
     
     \item \emph{slowest-first approach}: in a first step, the slow variables are integrated, using an extrapolated waveform of $y_F$ based on information from the last macro step size $[\bar t -H,\bar t]$ for evaluating the coupling variable $y_F$ in the current macro step from $\bar t$ to $\bar t + H$. In a second step, $m$ micro steps are performed to integrate the fast variables $y_F$ from $\bar t$ to $\bar t + H$, using an interpolated waveform of $y_S$ based on information from the current macro step size $[\bar t,\bar t+H]$ for evaluating the coupling variable $y_F$.
     \item \emph{fastest-first approach}: in a first step, $m$ micro steps are performed to integrate the fast variables, using an extrapolated waveform of $y_S$ based on information from the last macro step size $[\bar t -H,\bar t]$ for evaluating the coupling variable $y_S$ in the current macro step from $\bar t$ to $\bar t + H$. In a second step, one macro  step is performed to integrate the slow variables $y_S$ from $\bar t$ to $\bar t + H$, using an interpolated waveform of $y_F$ based on information from the current macro step size $[\bar t,\bar t+H]$ for evaluating the coupling variable $y_F$.
 \end{enumerate}
\fi

\section{The ODE case}
\label{sec:ODE}
\nopagebreak%
The details presented in this section are based on a result first presented in~\cite{Bartel_Guenther_2019}. Starting from this result, we use the underlying strategy to extend it to our case of the three multirate versions named in the previous section.
Basically, for ODE systems, all variants of extrapolation/interpolation-based multirate schemes have convergence order $p$  (in the final asymptotic phase) provided that  it holds: \color{black}
\begin{enumerate}[label=\roman*)]
\item the basic integration scheme  (i.e., the scheme for both the slow and the fast subsystems with given coupling data) \color{black} has order $p$ and 

\item
the extrapolation/interpolation schemes are of approximation order $p-1$. 
\end{enumerate}
 For the fully decoupled approach, this is a consequence of the following result, which is a particular case of a more general setting presented in~\cite{Bartel_Guenther_2019}\color{black}:
\begin{theorem}[Consistency  of fully-decoupled multirate schemes]\label{thm:general-multirate}
Given the coupled ODE-IVP \eqref{ode.part.comp}, where $\fslow$ and $\ffast$ are Lipschitz w.r.t. the sought solution. Furthermore, we apply two basic integration schemes of order $p$: one for $\yslow$ with macro step size $H$, a second for $\yfast$ with fixed multirate factor $m(\in\naturalnumbers)$ steps of size $h$. If these integration schemes are combined with two extrapolation procedures for the coupling variables of order $p-1$, the resulting fully decoupled multirate scheme has order $p$. 
\end{theorem}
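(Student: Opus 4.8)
The plan is to treat the fully-decoupled macro step as a single one-step map advancing $(\yslow(\bar t),\yfast(\bar t))$ to time $\bar t + H$, and to invoke the standard convergence principle that a Lipschitz-stable one-step method whose local error per macro step is $\mathcal{O}(H^{p+1})$ is globally convergent of order $p$. Stability is immediate here: both basic schemes depend Lipschitz-continuously on their data, $\fslow,\ffast$ are Lipschitz by hypothesis, and the extrapolation operators are linear in the data they use, so the macro map is Lipschitz with a constant of the form $1+\mathcal{O}(H)$. It therefore remains only to bound the local error, i.e.\ the deviation after one macro step started from exact data $\yslow(\bar t),\yfast(\bar t)$ from the exact solution of \eqref{eq:ode.part.comp.intermediate}.

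First I would control the extrapolation. Because in the fully-decoupled approach both extrapolated waveforms $\tyslow,\tyfast$ are built solely from the (now exact) data at $\bar t$, and because an extrapolation of order $p-1$ reproduces the Taylor polynomial of degree $p-1$ of the smooth exact solution, one obtains the pointwise bounds
\begin{equation*}
\tyfast(t)-\yfast(t)=\mathcal{O}(H^{p}),\qquad \tyslow(t)-\yslow(t)=\mathcal{O}(H^{p})\qquad (t\in[\bar t,\bar t+H]).
\end{equation*}

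The central step is a two-stage comparison that separates the coupling error from the discretization error. I introduce the auxiliary \emph{continuous decoupled} solutions $\hyslow,\hyfast$ on $[\bar t,\bar t+H]$ solving $\tfrac{d}{dt}\hyslow=\fslow(t,\hyslow,\tyfast)$ and $\tfrac{d}{dt}\hyfast=\ffast(t,\tyslow,\hyfast)$ with the same exact initial data. Subtracting the exact equations \eqref{ode.part.comp}, adding and subtracting $\fslow(t,\yslow,\tyfast)$ (resp.\ the $\ffast$ analogue), and using the Lipschitz property together with the $\mathcal{O}(H^{p})$ coupling bound yields a differential inequality whose Gronwall integral over an interval of length $H$ gives
\begin{equation*}
\hyslow-\yslow=\mathcal{O}(H^{p+1}),\qquad \hyfast-\yfast=\mathcal{O}(H^{p+1});
\end{equation*}
the gain of one power of $H$ is exactly the integration over the macro step (since $e^{LH}-1=\mathcal{O}(H)$). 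It then remains to compare the numerical output with $\hyslow,\hyfast$: these two differences are nothing but the discretization errors of the order-$p$ basic schemes applied to the genuine ODEs $\dyslow=\fslow(t,\cdot,\tyfast)$ and $\dyfast=\ffast(t,\tyslow,\cdot)$, whose right-hand sides are as smooth as $\fslow,\ffast$ (the polynomial extrapolants having $H$-uniformly bounded derivatives), hence $\mathcal{O}(H^{p+1})$ for the single macro step of $\yslow$ and $\mathcal{O}(m\,H^{p+1})=\mathcal{O}(H^{p+1})$ for the $m$ micro steps of $\yfast$ with $m$ fixed. The triangle inequality then delivers a local error $\mathcal{O}(H^{p+1})$, and the convergence principle of the first paragraph finishes the argument.

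I expect the delicate point to be precisely this one-order bookkeeping: one must verify that an $\mathcal{O}(H^{p})$ error in the coupling variable contributes only $\mathcal{O}(H^{p+1})$ to the local error, which is what licenses extrapolation of order $p-1$ rather than $p$; this rests on the Gronwall estimate above and on the fact that the extrapolation data, evaluated from exact initial values at $\bar t$, is itself exact throughout the local analysis. A secondary technical point is to justify the order-$p$ local bound for the basic schemes when the decoupled right-hand sides carry the time-dependent extrapolant as an argument, which needs the extrapolant's derivatives to remain bounded as $H\to 0$. The fully-decoupled structure keeps the two subsystems genuinely independent over the step (both reading only data at $\bar t$), so no implicit within-step feedback enters; this is what makes the present case the cleanest of the three and is the feature that would have to be revisited for the slowest-first and fastest-first variants, where the interpolant depends on quantities generated inside the current macro step.
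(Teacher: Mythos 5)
Your proposal is correct and follows essentially the same route as the paper's proof: the triangle-inequality split between the numerical solution and the exact solution of the decoupled (extrapolation-modified) system, the order-$p$ local bound for the basic schemes applied to that system, and the Gronwall argument converting the $\mathcal{O}(H^{p})$ extrapolation error of the coupling variables into an $\mathcal{O}(H^{p+1})$ contribution over the macro step. Your additional remarks on the Lipschitz stability of the macro map and on the boundedness of the extrapolant's derivatives go slightly beyond the paper, which stops at the consistency estimate, but they do not change the argument.
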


Since the strategy of the proof is needed for the further new results, we present the proof in details, although a slightly more general version can be found in \cite{Bartel_Guenther_2019}.
\color{black}

\begin{proof}
We consider the case that we have computed the IVP system~\eqref{ode.part.comp} until time $\bar{t}$ with initial data $\yslow(\bar t) = \yslow[{\bar t}]$,  $\yfast(\bar t) = \yfast[{\bar t}]$, i.e., we have the setting given in  system~\eqref{eq:ode.part.comp.intermediate}. Moreover, the unique solution of \eqref{eq:ode.part.comp.intermediate} is referred to as  
\[
(\yslow(t; \, \yslow[{\bar t}], \, \yfast[{\bar t}])^{\!\top}\!\!, \;\,\, 
  \yfast(t; \, \yslow[{\bar t}], \, \yfast[{\bar t}])^{\!\top}) %
  \;\quad \mbox{ or }\;\quad (\yslow(t)^{\!\top}\!\!, \;\,\, \yfast(t)^{\!\top}) \mbox{ as short-hand}.
\]
Next, we provide extrapolated, known quantities $\tyslow$ and $\tyfast$ for the coupling variables of order $p-1$: (for constants respective $L_S,\, L_F >0$)
\begin{equation}\label{eq:extrapolation-approx}
\begin{array}{rll}
\yslow(t)-\tyslow(t) &= L_S \cdot H^p + {\cal O}(H^{p+1}) \;\quad 
&\text{for any } \, t \in [\bar t,\, \bar t+H], \qquad 
\text{ and } \;  \\
    \yfast(t)-\tyfast(t) &= L_F \cdot H^p + {\cal O}(H^{p+1})
\qquad &\text{for any } \, t \in [\bar t,\, \bar t+H].
\end{array}
\end{equation}
Replacing the coupling variables in~\eqref{eq:ode.part.comp.intermediate}
by $\tyslow$ and $\tyfast$, we obtain the following modified system
\begin{equation} \label{mod.ivp.ode}
\begin{array}{rlrl}
\dyslow & = \fslow(t, \, \yslow, \, \tyfast)=: \tilde{f}_S (t, \, \yslow), 
             \qquad & \yslow(\bar t) = \yslow[{\bar t}],
             \\
\dyfast & =  \ffast(t, \, \tyslow, \, \yfast)=: \tilde{f}_F (t, \, \yfast), 
             \qquad & \yfast(\bar t) = \yfast[{\bar t}], 
             \end{array}
\end{equation}
which is fully decoupled (for $t\in [\bar{t},\, \bar{t} +H]$). Its unique solution is referred to as
\[ (\hyslow(t; \,\, \yslow[\bar t], \, \yfast[\bar t])^{\!\top}\!\!, \;\,\, 
      \hyfast(t; \,\, \yslow[\bar t], \, \yfast[\bar t])^{\!\top}).
\]     

Now, we apply the two basic integration schemes of order $p$ in multirate fashion to the decoupled model~\eqref{mod.ivp.ode} and we refer to the numerical solution at $t^{\ast}=\bar{t} + H$ as 
\[
(\yslow[H](t^*),\, \yfast[H](t^*))^{\!\top} .
\]
Then, the distance between multirate and exact solution can be estimated as follows:        
\begin{align}\label{eq:split-error-general}
\begin{pmatrix} \| \yslow[H] (t^*) - \yslow (t^*) \| \\ 
                \| \yfast[H] (t^*) - \yfast (t^*) \|
\end{pmatrix} 
& \leq 
 \begin{pmatrix} \| \yslow[H] (t^*) - \hyslow (t^*) \| \\ 
                 \| \yfast[H] (t^*) - \hyfast (t^*) \|
\end{pmatrix} 
 +
 \begin{pmatrix} \| \hyslow (t^*) - \yslow (t^*)\| \\ 
                 \| \hyfast (t^*) - \yfast (t^*) \|
\end{pmatrix}.
\end{align}
The fully decoupled multirate scheme gives for the first term on the right-hand side:
\begin{align}\label{eq:integration-error-general}
 \begin{pmatrix} \| \yslow[H](t^*) - \hyslow (t^*) \| \\ 
                 \| \yfast[H](t^*) - \hyfast (t^*) \|
\end{pmatrix} 
 \le   
 \begin{pmatrix}
     c_S H^{p+1} + \mathcal{O} (H^{p+2})  \\ 
     c_F H^{p+1} + \mathcal{O} (H^{p+2})
 \end{pmatrix} 
\end{align}
employing constants $c_S, \, c_F >0$ (for leading errors). 
Using Lipschitz continuity of $\fslow,\, \ffast$ for the second summand on the right-hand side of \eqref{eq:split-error-general}, we find
\begin{align} \nonumber
\begin{pmatrix} \| \hyslow (t^*) - \yslow (t^*) \| \\ 
                \| \hyfast (t^*) - \yfast (t^*) \| 
\end{pmatrix} 
 & \leq  
\int_{\bar t}^{t^*}  \!\!
\begin{pmatrix}
  \| \fslow \bigl(\tau,\, \hyslow (\tau),\, \tyfast (\tau)\bigr) 
   - \fslow \bigl(\tau,\, \yslow (\tau),\,  \yfast (\tau)\bigr) \| \\ 
  \| \ffast \bigl(\tau,\, \tyslow (\tau),\, \hyfast (\tau)\bigr) 
  -  \ffast \bigl(\tau,\, \yslow (\tau),\,  \yfast (\tau)\bigl) \|
\end{pmatrix} 
d \tau \\
& \leq  
\int_{\bar t}^{t^*} \!\!
\begin{pmatrix}
  L_{S,S} \|\hyslow (\tau) - \yslow (\tau) \| + L_{S,F} \|\tyfast (\tau) - \yfast (\tau) \| 
     \\
  L_{F,S} \|\tyslow (\tau) - \yslow (\tau) \| + L_{F,F} \|\hyfast (\tau) - \yfast (\tau) \|  
\end{pmatrix} d \tau 
\label{eq:estimate_hat}
\end{align}
with respective Lipschitz constants $L_{i,j}$ (for system $i$ and dependent variables $j$). We remark that this estimate is decoupled. Inserting the extrapolation estimates \eqref{eq:extrapolation-approx}, we deduce further
\begin{align*}
\begin{pmatrix} \!\| \hyslow (t^*) - \yslow(t^*) \|\! \\[4ex] 
                \!\| \hyfast (t^*) - \yfast(t^*) \| \!
\end{pmatrix} 
& \leq  
\begin{pmatrix} \displaystyle
\! L_{S,F} \cdot L_{F} \cdot 
    H^{p+1} 
   +  L_{S,S} \!\int\limits_{\bar t}^{\mbox{}\,\,\,t^*} \!\! \|\hyslow(\tau) - \yslow(\tau) \| d \tau 
  + {\cal O}(H^{p+2})\! \\[0.5ex] \displaystyle
\! L_{F,S} \cdot L_{S}  \cdot H^{p+1}  
  +   L_{F,F} \!\int\limits_{\bar t}^{\mbox{}\,\,\,t^*} \!\! \|\hyfast(\tau) - \yfast(\tau) \| d \tau 
    + {\cal O}(H^{p+2})   \!
\end{pmatrix} \!.
\end{align*}
Via Gronwall's lemma, we deduce:
\begin{align}
  \begin{pmatrix} \| \hyslow (t^*) - \yslow (t^*) \| \\[0.25ex]
                  \| \hyfast (t^*) - \yfast (t^*) \| 
\end{pmatrix} 
& \leq  
\begin{pmatrix} 
 L_{S,F}  L_{F} 
 \,  e^{L_{S,S}  (t^* -\bar{t})} \,  H^{p+1}  + {\cal O}(H^{p+2}) \\[0.25ex]
     L_{F,S}  L_{S} \, e^{L_{F,F}  (t^* -\bar{t})}  \, H^{p+1}    + {\cal O}(H^{p+2})
\end{pmatrix} .
\label{eq:final_fully_decoupled}
\end{align}
In combination with the integration estimate~\eqref{eq:integration-error-general},  the error \eqref{eq:split-error-general} of the fully-decoupled multirate scheme has consistency order $p$ on the macro scale level, which is the claim.
\hfill $\Box$
\end{proof}
\color{black}

The proof can be slightly adapted to verify the convergence result for the both remaining variants as well:
\begin{corollary}[Consistency of slowest-first multirate schemes]
\label{thm:general-multirate2}
The convergence result of Theorem~\ref{thm:general-multirate} 
 remains 
valid if the fully-decoupled approach is replaced by the slowest-first approach, i.e., the coupling variables $\yslow$  (during the integration of  $\yfast$) 
are evaluated using interpolation  of the already computed slow data in the current macro step.  %
\end{corollary}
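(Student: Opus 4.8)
The plan is to reuse, almost verbatim, the error-splitting machinery from the proof of Theorem~\ref{thm:general-multirate}, after isolating the single place where the slowest-first scheme differs from the fully-decoupled one. In both approaches the slow subsystem is integrated over the macro step $H$ using an order-$(p-1)$ \emph{extrapolated} waveform $\tyfast$ for its coupling variable $\yfast$; this part is untouched, so the slow-row estimates in \eqref{eq:extrapolation-approx}--\eqref{eq:final_fully_decoupled} remain literally valid. The only change is that the coupling variable $\yslow$ which feeds the fast subsystem is now produced by \emph{interpolation} of the already computed slow nodal data on $[\bar t,\bar t+H]$ instead of by extrapolation from $\bar t$. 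Hence the whole argument reduces to a single claim: the interpolated waveform $\tyslow$ still satisfies $\yslow(t)-\tyslow(t)=\mathcal{O}(H^{p})$ on $[\bar t,\bar t+H]$, i.e. the second line of \eqref{eq:extrapolation-approx} survives (possibly with a different leading constant $L_S$). If I establish this, the decoupled modified system \eqref{mod.ivp.ode}, the splitting \eqref{eq:split-error-general}, the integration bound \eqref{eq:integration-error-general}, the Lipschitz estimate \eqref{eq:estimate_hat}, and the Gronwall step \eqref{eq:final_fully_decoupled} all carry over without modification, and the order-$p$ conclusion follows exactly as before.

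The key estimate I would prove is the interpolation claim. Write $\tyslow-\yslow=(\tyslow-I[\yslow])+(I[\yslow]-\yslow)$, where $I[\yslow]$ denotes the interpolant built from the \emph{exact} slow solution at the same nodes. The second summand is the genuine interpolation error of an order-$(p-1)$ procedure, hence $\mathcal{O}(H^{p})$, precisely as for the extrapolation used in Theorem~\ref{thm:general-multirate}. The first summand is the interpolant of the nodal deviations between computed and exact slow values; since the slow subsystem is advanced by a single order-$p$ macro step, these nodal errors are $\mathcal{O}(H^{p+1})$, and because the interpolation operator is stable for a fixed nodal pattern (its Lebesgue constant being bounded independently of $H$), the first summand is $\mathcal{O}(H^{p+1})$. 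Adding the two gives $\yslow(t)-\tyslow(t)=\mathcal{O}(H^{p})$, i.e. the interpolated coupling data is of the same quality as the extrapolated data, with the computed-versus-exact discrepancy contributing only at the harmless higher order $H^{p+1}$.

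With this in hand, the remainder is bookkeeping: the fast-row of \eqref{eq:estimate_hat} reads $\int_{\bar t}^{t^*}\!\bigl(L_{F,S}\|\tyslow(\tau)-\yslow(\tau)\|+L_{F,F}\|\hyfast(\tau)-\yfast(\tau)\|\bigr)\,d\tau$, into which the $\mathcal{O}(H^{p})$ bound is inserted, and Gronwall again yields $\mathcal{O}(H^{p+1})$. The main obstacle — and the point deserving care — is the sequential dependency: the interpolant $\tyslow$ is built from the numerical slow solution, which itself depends on the extrapolated $\tyfast$. I would resolve this by noting that, once computed, $\tyslow$ is a fixed, piecewise-polynomial (hence sufficiently smooth) known function of $t$; the fast subsystem is then genuinely decoupled as $\dyfast=\ffast(t,\tyslow,\yfast)$, so the order-$p$ basic scheme applied over the fixed number $m$ of micro steps of size $h=H/m$ delivers the integration error $\mathcal{O}(H^{p+1})$ of \eqref{eq:integration-error-general}, its smoothness being exactly what is needed for the scheme to realize its full order. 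Verifying both the $\mathcal{O}(H^{p+1})$ nodal-error bound and the $H$-independent stability of the interpolation operator is where the real work lies; everything else is inherited from Theorem~\ref{thm:general-multirate}.
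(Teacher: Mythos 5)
Your proposal is correct and follows essentially the same route as the paper's proof: the slow-component estimates are inherited unchanged, and the fast component is re-estimated by showing that the interpolated coupling waveform is still $\mathcal{O}(H^p)$-accurate and then repeating the Lipschitz/Gronwall argument, exactly as the paper does. The only (cosmetic) difference is that you justify the $\mathcal{O}(H^p)$ accuracy of the interpolant of the \emph{numerical} slow data explicitly (nodal errors of size $\mathcal{O}(H^{p+1})$ combined with an $H$-independent Lebesgue constant), whereas the paper simply postulates this accuracy in \eqref{eq:interpolated-values-slow} and routes the triangle inequality through $\hyslow$ instead of through the interpolant of the exact data.
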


\begin{proof} We just give the changes of the above proof. 
For the slowest-first variant, the modified equation on the current macro step $[\bar{t},\, \bar{t}+H]$
reads
\begin{equation} \label{mod.ivp.ode.2}
\begin{array}{rlrl}
\dyslow & = \fslow(t, \, \yslow, \, \tyfast)=: \tilde{f}_S (t, \, \yslow), 
             \qquad & \yslow(\bar t) = \yslow[{\bar t}],
             \\
\dyfast & =  \ffast(t, \, {\color{black} \yslow^{int}}, \, \yfast)=: \tilde{f}_F ({\color{black} t, \, \yfast}), 
             \qquad & \yfast(\bar t) = \yfast[{\bar t}]
             \end{array}
\end{equation}
\color{black}
with extrapolated values $\tyfast$ as in the fully-decoupled approach and interpolated values $\yslow^{int}$ of order $p-1$ based on the numerical approximations $y_{S,H} 
\color{black} (t_k)$ \color{black} with $t_k \in [\bar t, \bar{t} +H]$ such that it holds: \color{black}
\begin{equation}\label{eq:interpolated-values-slow}
\hyslow(t)-\yslow^{int}(t) = \widetilde L_S \cdot H^p + {\cal O}(H^{p+1}) \;\quad 
\text{for any } \, t \in [\bar t,\, \bar t+H].
\end{equation}
\color{black}


\noindent Again, the hat-notation is again employed for the exact solution of system \eqref{mod.ivp.ode.2}. 
The computation of the slow part still employs extrapolated coupling variables. 
This decouples the slow part from the fast part as before 
and hence the error estimates of $\yslow$ are unchanged. 
In fact, we can use the 
estimates (\ref{eq:integration-error-general}${}_1$) and (\ref{eq:final_fully_decoupled}${}_1$): 
for any time $\tau \in (\bar{t},\, \bar{t}+H]$.

Now, for the fast part, the corresponding estimate to (\ref{eq:estimate_hat}${}_2$) reads
\color{black}
(with using $\yslow^{int}(t)-\yslow(t)= \yslow^{int}(t)-\hyslow(t)+\hyslow(t)-\yslow(t)$
\color{black}
\begin{eqnarray*}
                \| \hyfast (t^*) - \yfast (t^*) \| 
 & \leq  &
\int_{\bar t}^{t^*} \!\!
  L_{F,S} \Bigl( {\color{black} \|\hyslow (\tau) - \yslow^{int} (\tau) \|} + \|\hyslow (\tau) - \yslow (\tau) \|   \color{black}\Bigr)\color{black} \\
  & &  \quad + \, L_{F,F} \|\hyfast (\tau) - \yfast (\tau) \|  
 \, d \tau .
\end{eqnarray*}
Using~(\ref{eq:final_fully_decoupled}${}_1$) (with $\tau$ instead of $t^\star)$ and \color{black} using \eqref{eq:interpolated-values-slow}\color{black}, we find 
\begin{align*}
                &\| \hyfast (t^*) - \yfast (t^*) \| 
                \\
 &\quad  \leq  
\int_{\bar t}^{t^*} \!\!
  \left( {\color{black} L_{F,S} \widetilde L_S H^{p} + \mathcal{O}(H^{p+1})} + 
  L_{F,S} L_{S,F}  L_{F}    \,  e^{L_{S,S}  (\tau -\bar{t})} \,  H^{p+1}  + \mathcal{O}(H^{p+2}) \right. \\
  & \qquad\qquad  +\, L_{F,F} \|\hyfast (\tau) - \yfast (\tau) \|  \Bigr) \,  d \tau \\
 & 
\quad \leq  {\color{black} L_{F,S} \widetilde L_S H^{p+1}} +
  L_{F,F} \!\int_{\bar t}^{t^*} \!\!
   \|\hyfast (\tau) - \yfast (\tau) \|  
 d \tau  + \mathcal{O}(H^{p+2}) .
\end{align*}
\color{black}

Now, the application of Gronwall's lemma leads to
$$
\| \hyfast (t^*) - \yfast (t^*) \| 
 \leq {\color{black} L_{F,S} \widetilde L_S }  e^{L_{F,F}H}H^{p+1}
 +{\color{black} \mathcal{O}(H^{p+2})}.
$$
 Finally, we need to form the total error in the fast components, the equivalent to (\ref{eq:split-error-general}${}_2$). Since the numerical scheme for the fast component is of order $p$, we can still employ (\ref{eq:integration-error-general}${}_2$), and
we get the estimate
\begin{eqnarray}
    \label{slowest.first.estimate2}
  \| \yfast[H] (t^*) - \yfast(t^*) \| 
& \leq & \left(
        c_F {\color{black} + L_{F,S} \widetilde L_S } \color{black}e^{{L_{F,F}H}}\color{black}
    \right) H^{p+1} + {\cal O}(H^{p+2}).
\end{eqnarray}

\hfill $\Box$

\begin{remark}
If one uses interpolation schemes of order $p$ instead of $p-1$, which is the case if dense output is used within embedded Runge-Kutte schemes, for example, one has to replace the term $\widetilde L_S H^p$ by
$\widetilde L_S H^{p+1}$, which yields the estimate
\begin{eqnarray}
  \label{slowest.first.estimate2.2}
\| \yfast[H] (t^*) - \yfast(t^*) \|  
& \le &  c_F  H^{p+1} + {\cal O}(H^{p+2}),
\end{eqnarray}
that is, the extra-/interpolation error is dominated by the error of the numerical integration scheme. 
\end{remark}

\if 0
ALTER TEXT BEGINN

Now, for the fast part, the corresponding estimate to (\ref{eq:estimate_hat}${}_2$) reads 
\begin{equation}
                \| \hyfast (t^*) - \yfast (t^*) \| 
 \leq  
\int_{\bar t}^{t^*} \!\!
  L_{F,S} \left(  \|\hyslow (\tau) - \yslow (\tau) \| + {\color{black} \|\hyslow (\tau) - \yslow^{int} (\tau) \|} \right)+ L_{F,F} \|\hyfast (\tau) - \yfast (\tau) \|  
 d \tau .
\end{equation}
Using~(\ref{eq:final_fully_decoupled}${}_1$) (with $\tau$ instead of $t^\star)$, we find 
\begin{align*}
                &\| \hyfast (t^*) - \yfast (t^*) \| 
                \\
 &\quad  \leq  
\int_{\bar t}^{t^*} \!\!
  \left( L_{F,S} L_{S,F}  L_{F}    \,  e^{L_{S,S}  (\tau -\bar{t})} \,  H^{p+1}  + L_{F,F} \|\hyfast (\tau) - \yfast (\tau) \| +\mathcal{O}(H^{p+2}) \right)  d \tau \\
 & 
\quad \leq  L_{F,S} L_{S,F}  L_{F} \frac{e^{L_{S,S}H}-1}{L_{S,S}} H^{p+1} +
  L_{F,F} \!\int_{\bar t}^{t^*} \!\!
   \|\hyfast (\tau) - \yfast (\tau) \|  
 d \tau  + \mathcal{O}(H^{p+3}) .
\end{align*}
\color{black}

Applying now Gronwall's lemma leads to
$$
\| \hyfast (t^*) - \yfast (t^*) \| 
 \leq L_{F,S} L_{S,F}  L_{F} \frac{e^{L_{S,S}H}-1}{L_{S,S}} e^{L_{F,F}H}H^{p+1}
 +\mathcal{O}(H^{p+3}).
$$
 Finally, we need to form the total error in the fast components, the equivalent to (\ref{eq:split-error-general}${}_2$). Since the numerical scheme for the fast component is of order $p$, we can still employ (\ref{eq:integration-error-general}${}_2$), and
we get the estimate
\begin{eqnarray}
    \label{slowest.first.estimate2}
  \| \yfast[H] (t^*) - \yfast(t^*) \|  
& \le & \left({c_F}  + L_{F,S} L_{S,F}  L_{F} \frac{e^{L_{S,S}H}-1}{L_{S,S}} e^{L_{F,F}H} \right) \,  H^{p+1} \\ \nonumber 
& = & c_F  H^{p+1} + {\cal O}(H^{p+2}),
\end{eqnarray}
that is, the extra-/interpolation error is dominated by the error of the numerical integration scheme. 
ALTER TEXT END
\fi
\hfill $\Box$
\end{proof}

\begin{corollary}[Consistency of fastest-first multirate schemes]
\label{thm:general-multirate3}
The convergence result of Theorem \ref{thm:general-multirate} remains valid \color{black} if the fully-decoupled approach is replaced by the fastest-first one, i.e., the coupling variables $\yfast$ (during the integration of $y_S$) \color{black} 
are evaluated using interpolation instead of extrapolation.
\end{corollary}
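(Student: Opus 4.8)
The plan is to mirror the proof of Corollary~\ref{thm:general-multirate2} under the symmetry that interchanges the slow and fast subsystems. In the fastest-first ordering the fast part is advanced first with an \emph{extrapolated} slow coupling variable, so it decouples exactly as in Theorem~\ref{thm:general-multirate}; the only genuinely new feature is that the slow part is then advanced with an \emph{interpolated} fast coupling variable. Accordingly, on the current macro step $[\bar t,\,\bar t+H]$ I would work with the modified system
\begin{equation*}
\begin{array}{rlrl}
\dyslow & = \fslow(t, \, \yslow, \, \yfast^{int})=: \tilde{f}_S (t, \, \yslow),
             \qquad & \yslow(\bar t) = \yslow[{\bar t}],
             \\
\dyfast & =  \ffast(t, \, \tyslow, \, \yfast)=: \tilde{f}_F (t, \, \yfast),
             \qquad & \yfast(\bar t) = \yfast[{\bar t}],
             \end{array}
\end{equation*}
with $\tyslow$ the order-$(p-1)$ extrapolate as before and $\yfast^{int}$ an order-$(p-1)$ interpolate satisfying $\hyfast(t)-\yfast^{int}(t)=\widetilde L_F\cdot H^p+{\cal O}(H^{p+1})$ on $[\bar t,\,\bar t+H]$; here the interpolant is conveniently built from the $m$ intermediate micro-step values $y_{F,H}(t_k)$, so its construction poses no difficulty.

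Because the fast computation still uses extrapolated coupling data, it is decoupled from the slow one, and the fast error estimates~(\ref{eq:integration-error-general}${}_2$) and~(\ref{eq:final_fully_decoupled}${}_2$) carry over unchanged. I would then repeat the slow estimate~(\ref{eq:estimate_hat}${}_1$), where the coupling term now reads $\|\yfast^{int}(\tau)-\yfast(\tau)\|$ and is split via $\yfast^{int}-\yfast=(\yfast^{int}-\hyfast)+(\hyfast-\yfast)$. Bounding the first summand by the interpolation estimate and the second by~(\ref{eq:final_fully_decoupled}${}_2$) gives
\begin{align*}
\| \hyslow (t^*) - \yslow (t^*) \|
&\leq \int_{\bar t}^{t^*} \!\! \Bigl( L_{S,F}\widetilde L_F H^{p} + L_{S,F}L_{F,S}L_S\,e^{L_{F,F}(\tau-\bar t)}H^{p+1} \\
&\qquad\quad +\, L_{S,S}\,\|\hyslow(\tau)-\yslow(\tau)\| + {\cal O}(H^{p+1})\Bigr)\, d\tau .
\end{align*}

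Integrating the explicit terms over the interval of length $H$ leaves a leading contribution $L_{S,F}\widetilde L_F H^{p+1}+{\cal O}(H^{p+2})$, and Gronwall's lemma then yields $\|\hyslow(t^*)-\yslow(t^*)\|\le L_{S,F}\widetilde L_F\,e^{L_{S,S}H}H^{p+1}+{\cal O}(H^{p+2})$. Feeding this together with the slow integration estimate~(\ref{eq:integration-error-general}${}_1$) into the split~(\ref{eq:split-error-general}${}_1$) gives $\|\yslow[H](t^*)-\yslow(t^*)\|\le\bigl(c_S+L_{S,F}\widetilde L_F e^{L_{S,S}H}\bigr)H^{p+1}+{\cal O}(H^{p+2})$, which is consistency of order~$p$ on the macro scale and hence the claim.

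I do not expect a genuine obstacle: the argument is the slowest-first estimate read off by the $S\leftrightarrow F$ symmetry, and the single macro step of the slow system makes its integration estimate directly ${\cal O}(H^{p+1})$. The one point requiring care is the order bookkeeping in the interpolation term: the interpolant is only of order $p-1$, so $\widetilde L_F H^p$ is a priori one power short of $H^{p+1}$; it is precisely the extra factor $H$ gained by integrating over the macro interval that restores the local order $p+1$, keeping the interpolation contribution at worst comparable to the integration error, exactly as noted in the remark after Corollary~\ref{thm:general-multirate2}.
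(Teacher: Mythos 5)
Your proposal is correct and follows essentially the same route as the paper's own proof: the same modified system on $[\bar t,\,\bar t+H]$, the same observation that the fast part still decouples so the estimates (\ref{eq:integration-error-general}${}_2$) and (\ref{eq:final_fully_decoupled}${}_2$) carry over, the same splitting $\yfast^{int}-\yfast=(\yfast^{int}-\hyfast)+(\hyfast-\yfast)$, and the same Gronwall step yielding $\bigl(c_S+L_{S,F}\widetilde L_F e^{L_{S,S}H}\bigr)H^{p+1}+{\cal O}(H^{p+2})$. Your closing remark on the order bookkeeping (the interpolation term $\widetilde L_F H^{p}$ gaining a factor $H$ from integration over the macro step) is exactly the mechanism the paper relies on.
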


\begin{proof}
For the fastest-first variant, the modified equation~\eqref{mod.ivp.ode} reads
 on $[\bar{t},\, \bar{t}+H]$\color{black} 
\begin{equation} \label{mod.ivp.ode.3}
\begin{array}{rlrl}
\dyslow & = \fslow(t, \, \yslow, \, {\color{black} \yfast^{ext}})=: \tilde{f}_S ({\color{black} t, \, \yslow} ), 
             \qquad & \yslow(\bar t) = \yslow[{\bar t}],
             \\
\dyfast & =  \ffast(t, \, \tyslow, \, \yfast)=: \tilde{f}_F (t, \, \, \yfast), 
             \qquad & \yfast(\bar t) = \yfast[{\bar t}],
             \end{array}
\end{equation}
\color{black}
with extrapolated values $\tyslow$ as in the fully-decoupled approach and interpolated values $\yfast^{int}$ of order $p-1$ based on the numerical approximations $y_{F,H}\color{black}(t_k)$ \color{black} with $t_k  \in [\bar t, \bar{t} + H]$:\color{black}
\begin{equation}\label{eq:interpolation-fast}
\hyfast(t)-\yfast^{int}(t) = \widetilde L_F \cdot H^p + {\cal O}(H^{p+1}) \;\quad 
\text{for any } \, t \in [\bar t,\, \bar t+H].
\end{equation}
\color{black}
Here, the second equation for $\yfast$ is unchanged with respect to Theorem~\ref{thm:general-multirate}, since the extrapolation of $\yslow$ is still used. 
Hence, we still have all  respective estimates for the fast part, in particular  (\ref{eq:integration-error-general}${}_2$) and   (\ref{eq:final_fully_decoupled}${}_2$).
For the slow part, the corresponding estimate to (\ref{eq:estimate_hat}${}_1$) now  reads 
\color{black}
(with using $\yfast^{int}(t)-\yfast(t)= \yfast^{int}(t)-\hyfast(t)+\hyfast(t)-\yfast(t)$)
\color{black}
\begin{eqnarray*}
                \| \hyslow (t^*) - \yslow (t^*) \| 
& \leq  &
\int_{\bar t}^{t^*} \!\! \left(L_{S,F} \color{black} \left( \color{black} \|\hyfast (\tau) - \yfast^{int} (\tau) \| \color{black} +
   \|\hyfast (\tau) - \yfast (\tau) \| \color{black} \right) \color{black}
  \right. 
  \\
  && \qquad \left.
 + \,   L_{S,S} \|\hyslow (\tau) - \yslow (\tau) \| \right)  
 d \tau .
\end{eqnarray*}
Using~\ref{eq:final_fully_decoupled}${}_2$ (with $\tau$ replaced by $t^\star$) and using \eqref{eq:interpolation-fast},  we find 
\begin{align*}
                \| \hyslow (t^*) - 
                &
                \yslow (t^*) \| 
                 \leq  
                \int_{\bar t}^{t^*} \!\!
  \Bigl( {\color{black} L_{S,F} \widetilde L_F H^p+  \mathcal{O}(H^{p+1}})+ L_{S,F} L_{F,S}  L_{S}    \,  e^{L_{F,F}  (\tau -\bar{t})} \,  H^{p+1}   \\
  &  
  \qquad\qquad\qquad + \, \mathcal{O}(H^{p+2}) 
  + L_{S,S} \|\hyslow (\tau) - \yslow (\tau) \|  \Bigr)
 d \tau \\
 & \leq   {\color{black} L_{S,F} \widetilde L_F H^{p+1}} +
 \int_{\bar t}^{t^*} \!\!
   L_{S,S} \|\hyfast (\tau) - \yfast (\tau) \|  
 d \tau +  \mathcal{O}(H^{p+2}). 
\end{align*}
Applying now Gronwall's lemma leads to
$$
\| \hyslow (t^*) - \yslow (t^*) \| 
 \leq {\color{black} L_{S,F}\widetilde L_F }   e^{L_{S,S}H}H^{p+1} + {\color{black} \mathcal{O}(H^{p+2})}.
$$
Finally, we use both the above deduced error and the numerical error~(\ref{eq:integration-error-general}${}_1$) in the  general error sum~(\ref{eq:split-error-general}${}_1$) \color{black} and we find  for the slow part 
\begin{eqnarray}
    \label{fastest.first.estimate2}
  \| \yslow[H] (t^*) - \yslow (t^*) \|  
& \leq & \Bigl( c_S + {\color{black} L_{S,F} \widetilde L_F} \color{black} e^{L_{S,S}H} \color{black} \Bigr)H^{p+1} + {\cal O}(H^{p+2}).
\end{eqnarray}

\begin{remark}
If one uses interpolation schemes of order $p$ instead of $p-1$, which is the case if dense output is used within embedded Runge-Kutte schemes, for example, one has to replace the term $\widetilde L_F H^p$ by
$\widetilde L_F H^{p+1}$, which yields the estimate
\begin{eqnarray}
  \label{fastest.first.estimate2.2}
  \| \yslow[H] (t^*) - \yslow (t^*) \|  
& \le & c_S H^{p+1} + {\cal O}(H^{p+2}),
\end{eqnarray}
that is, the extra-/interpolation error is dominated by the error of the numerical integration scheme. 
\end{remark}

\if 0
BEGINN ALTER TEXT

For the slow part, the corresponding estimate to (\ref{eq:estimate_hat}${}_1$) now  reads 
\begin{equation}
                \| \hyslow (t^*) - \yslow (t^*) \| 
 \leq  
\int_{\bar t}^{t^*} \!\! \left(
  L_{S,F} \|\hyfast (\tau) - \yfast (\tau) \| + L_{S,S} \|\hyslow (\tau) - \yslow (\tau) \| \right)  
 d \tau .
\end{equation}
Using~\ref{eq:final_fully_decoupled}${}_2$ (with $\tau$ replaced by $t^\star$),  we find 
\begin{align*}
                \| \hyslow (t^*) - 
                &
                \yslow (t^*) \| \\
                & \leq  
                \int_{\bar t}^{t^*} \!\!
  \left( L_{S,F} L_{F,S}  L_{S}    \,  e^{L_{F,F}  (\tau -\bar{t})} \,  H^{p+1}  
  +  \mathcal{O}(H^{p+2}) 
  + L_{S,S} \|\hyslow (\tau) - \yslow (\tau) \|  \right)
 d \tau \\
 & \leq  L_{S,F} L_{F,S}  L_{S} \frac{e^{L_{F,F}H}-1}{L_{F,F}} H^{p+1} +
 \int_{\bar t}^{t^*} \!\!
   L_{S,S} \|\hyfast (\tau) - \yfast (\tau) \|  
 d \tau +  \mathcal{O}(H^{p+3}). 
\end{align*}
Applying now Gronwall's lemma leads to
$$
\| \hyslow (t^*) - \yslow (t^*) \| 
 \leq L_{S,F} L_{F,S}  L_{S} \frac{e^{L_{F,F}H}-1}{L_{F,F}} e^{L_{S,S}H}H^{p+1} + \mathcal{O}(H^{p+3}).
$$
Finally, we use both the above deduced error and the numerical error~(\ref{eq:integration-error-general}${}_1$) in the  general error sum~(\ref{eq:split-error-general}${}_1$) \color{black} and we find  for the slow part 
\begin{eqnarray}
    \label{fastest.first.estimate2}
  \| \yslow[H] (t^*) - \yslow (t^*) \|  
& \le & \left( c_S   + L_{S,F} L_{F,S}  L_{S} \frac{e^{L_{F,F}H}-1}{L_{F,F}} e^{L_{S,S}H} \right)  H^{p+1} + \mathcal{O}(H^{p+2})\\ \nonumber
& = & c_S H^{p+1} + {\cal O}(H^{p+2}),
\end{eqnarray}
i.e., the numerical integration error is  dominated by  the extrapolation/interpolation error.
\hfill $\Box$

ENDE ALTER TEXT
\fi
\end{proof}

\begin{remark}
\label{remark.also.mr}
 For the basic integration schemes employed in Thm.~\ref{thm:general-multirate}, Cor.~\ref{thm:general-multirate2} and
 Cor.~\ref{thm:general-multirate3}
 we can use either

\begin{enumerate}
 \item[a)]  one-step integration schemes, or
 \item[b)] multistep schemes, where both schemes are $0$-stable.
 \end{enumerate}
\end{remark}
\begin{remark}[Schemes]
Extrapolation of order 0 and 1 can be easily obtained from the initial data at $t=\bar{t}$ and a derivative information, which is provided by the ODE. This allows directly the construction of multirate methods of order 2.
\end{remark}

\begin{remark}
  Notice that for a working multirate scheme, we still have to 
 specify the extrapolation/interpolation  formulas. In fact, 
  arbitrary high orders of the extra-/inter\-po\-la\-tion are only possible 
  if information of previous time steps is used. Generally, this may turn a one-step 
  scheme \color{black} into a multi-step scheme, and raise questions concerning stability. However, if the extrapolation is computed sequentially in a spline-oriented fashion  (see Remark~\ref{remark.end.ch2}), the modified functions $\tilde f_S$ and $\tilde f_F$ are the same for all time intervals inside $[t_0,t_\myend]$, \color{black}
  and the extrapolation/interpolation based multirate scheme can still be considered as a one-step scheme applied to the modified ODE equations.
\end{remark}

\color{black}

\section{The DAE case}
\label{sec:dae}

The component-wise partitioning~\eqref{ode.part.comp} (as well as the right-hand side partitioning~\eqref{ode.part.rhs}) can be generalized to the case of differential algebraic equations (DAEs). Let us assume that the slow and the fast subsystem can be written as semi-explicit system of index-1, each for given corresponding coupling terms as time functions. This reads:
 \begin{alignat}{2} \nonumber
  \dot y_S &=  f_S(t,\, y_S,\, y_F,\, z_S,\, z_F), \; \, y_S (t_0)=y_{S,0},
\;\;  &
  \dot y_F  &=  f_F(t,\, y_S,\, y_F,\, z_S,\, z_F) , \;\,  y_F (t_0)=y_{F,0},
  \\ %
         0 &=  g_S(t,\, y_S,\, y_F,\, z_S,\, z_F), 
         &
\quad
          0 &=  g_F(t,\, y_S,\, y_F,\, z_S,\, z_F).
          \label{eq:DAE-split-formulation}
\end{alignat}
Moreover, the overall system is assumed to be index-1 as well. All index-1 conditions lead to the assumption that the following Jacobians 
\begin{equation}\label{eq:cosim-index-one-conditions}
\frac{\partial g_S}{\partial z_S }, \quad
\frac{\partial g_F}{\partial z_F } \quad \mbox{and} \quad
\begin{pmatrix} 
\frac{\partial g_S}{\partial z_S } &  \frac{\partial g_S}{\partial z_F } \\[0.5ex]
\frac{\partial g_F}{\partial z_S } & \frac{\partial g_F}{\partial z_F }
\end{pmatrix}
\qquad \text{ are regular}
\end{equation}
in a neighborhood of the solution. For later use, we introduce Lipschitz constants with respect to the algebraic variables:
\begin{equation}\label{eq:lipschitz.algebraic}
 || g_S (t, y_S, y_F, z_S, z_F) - g_S (t, y_S, y_F, \hat{z}_S, \hat{z}_F)||
 \,\, \le\,\, L^{g_S}_{S} || z_S -\hat{z}_S|| \,+\,  L^{g_S}_{F} || z_F -\hat{z}_F|| 
\end{equation}
and analogously $L^{g_F}_{S}$, $L^{g_F}_{F}$ and $L^{f_\lambda}_{\rho}$ with $\lambda,\rho\in \{F,\, S\}$. Furthermore, for the Lipschitz constants with respect to the differential variables, we use the symbol $M^j_{\lambda}$ (with 
 $j\in \{f_S,\, f_F
\}$), e.g.,
\begin{equation}\label{eq:lipschitz.differential}
 || f_S (t, y_S, y_F, z_S, z_F) - f_S (t, \hat{y}_S, \hat{y}_F, z_S, z_F)||
 \,\, \le\,\, M^{f_S}_{S} || y_S -\hat{y}_S|| 
        \,+\,  M^{f_S}_{F} || y_F -\hat{y}_F||. 
\end{equation}

To analyze inter-/extrapolation based multirate schemes for these general index-1 DAEs, 
\color{black}
we consider dynamic iteration schemes 
with old, known iterates $y_\lambda^{(i)},\,z_\lambda^{(i)}$ and to be computed, new iterates $y_\lambda^{(i+1)},\, z_\lambda^{(i+1)}$ defined by the following dynamic system
\begin{align*}
\dot y_S^{(i+1)} &= F_S(t,\, y_S^{(i+1)},\, y_F^{(i+1)},\, z_S^{(i+1)},\, z_F^{(i+1)},\, y_S^{(i)},\, y_F^{(i)},\, z_S^{(i)},\, z_F^{(i)}), \\
0 &=  G_S(t,\, y_S^{(i+1)},\, y_F^{(i+1)},\, z_S^{(i+1)},\, z_F^{(i+1)},\, y_S^{(i)},\, y_F^{(i)},\, z_S^{(i)},\, z_F^{(i)}), \\[2mm]
\dot y_F^{(i+1)} &= F_F(t,\, y_S^{(i+1)},\, y_F^{(i+1)},\, z_S^{(i+1)},\, z_F^{(i+1)},\, y_S^{(i)},\, y_F^{(i)},\, z_S^{(i)},\, z_F^{(i)}), \\
0 &=  G_F(t,\, y_S^{(i+1)},\, y_F^{(i+1)},\, z_S^{(i+1)},\, z_F^{(i+1)},\, y_S^{(i)},\, y_F^{(i)},\, z_S^{(i)},\, z_F^{(i)})
\end{align*}
based on splitting functions $F_S,G_S,F_F$ and $G_F$.
To have a simpler notation, we introduce the abbreviations
\[
  x := (y_S,\, y_F,\, z_S,\, z_F).
  \quad
  x_S := (y_S,\, z_S), \quad x_F :=(y_F,\, z_F).
\]
The above splitting functions have to be consistent, this reads,
\begin{alignat*}{2}
F_{\lambda}(t,\, x,\, x) &=f_{\lambda}(t,\, x), 
\qquad & 
G_{\lambda}(t,\, x,\, x) &=g_{\lambda}(t,\, x), 
\quad \text{for } \, \lambda  \in \{F,\,S\}.
\end{alignat*}
For the different multirate approaches, we have the following splitting functions:
\color{black}
\begin{enumerate}[label=\roman*)]
    \item Fully-decoupled approach:
    \begin{alignat*}{2}
F_S(t,\, x^{(i+1)},\, x^{(i)}) & =
    f_S(t,\, x_S^{(i+1)},\, x_F^{(i)}),
\qquad & 
F_F(t,\, x^{(i+1)},\, x^{(i)}) &=
    f_F(t,\, x_S^{(i)},\, x_F^{(i+1)}),
    \\
G_S(t,\, x^{(i+1)},\, x^{(i)}) & =
    g_S(t,\, x_S^{(i+1)},\, x_F^{(i)}),
\qquad & 
G_F(t, x^{(i+1)},\, x^{(i)}) &=
    g_F(t,\, x_S^{(i)},\, x_F^{(i+1)}).
\end{alignat*}
\color{black}
   \item Slowest-first approach:
    \begin{alignat*}{2}
F_S(t,\, x^{(i+1)},\, x^{(i)}) & =
    f_S(t,\, x_S^{(i+1)},\, x_F^{(i)}),
\qquad & 
F_F(t,\, x^{(i+1)},\, x^{(i)}) &=
    f_F(t,\, x_S^{(i+1)},\, x_F^{(i+1)}),
    \\
G_S(t,\, x^{(i+1)},\, x^{(i)}) & =
    g_S(t,\, x_S^{(i+1)},\, x_F^{(i)}),
\qquad & 
G_F(t, x^{(i+1)},\, x^{(i)}) &=
    g_F(t,\, x_S^{(i+1)},\, x_F^{(i+1)}).
\end{alignat*}
\color{black}
 \item Fastest-first approach:
    \begin{alignat*}{2}
F_S(t,\, x^{(i+1)},\, x^{(i)}) & =
    f_S(t,\, x_S^{(i+1)},\, x_F^{(i+1)}),
\qquad & 
F_F(t,\, x^{(i+1)},\, x^{(i)}) &=
    f_F(t,\, x_S^{(i)},\, x_F^{(i+1)}),
    \\
G_S(t,\, x^{(i+1)},\, x^{(i)}) & =
    g_S(t,\, x_S^{(i+1)},\, x_F^{(i+1)}),
\qquad & 
G_F(t, x^{(i+1)},\, x^{(i)}) &=
    g_F(t,\, x_S^{(i)},\, x_F^{(i+1)}).
\end{alignat*}
\color{black}
\end{enumerate}
It has been shown that convergence of a dynamic iteration scheme for DAEs can no longer be guaranteed by choosing a 
window step size $H$ small enough, see e.g. \cite{Arnold_2001,Jackiewicz_1996}. An additional contractivity condition 
has to hold to guarantee convergence. We have to distinguish the following two aspects for contraction: 
\begin{enumerate}[label=\alph*)]
    \item {\em Convergence within one window %
    $[\bar{t},\, \bar{t}+H]$:} 
    In this case, it is sufficient to have~\cite{Jackiewicz_1996}: 
    $$
        \max_{\bar{t} \,\,\le\,\, \tau \,\,\le\,\,\, \bar{t}+H} 
        \left\|
            \left. 
            \begin{pmatrix}
         \frac{\partial G_S}{\partial z_S^{(i+1)}} & \frac{\partial G_S}{\partial z_F^{(i+1)}} \\
           \frac{\partial G_F}{\partial z_S^{(i+1)}} & \frac{\partial G_F}{\partial z_F^{(i+1)}} \\
        \end{pmatrix}^{-1} \cdot
          \begin{pmatrix}
         \frac{\partial G_S}{\partial z_S^{(i)}} & \frac{\partial G_S}{\partial z_F^{(i)}} \\
           \frac{\partial G_F}{\partial z_S^{(i)}} & \frac{\partial G_F}{\partial z_F^{(i)}} \\
        \end{pmatrix} 
        \right|_{}
        \raisebox{-4ex}{%
        $
        \bigl(\tau,\, x(\tau),\, x(\tau)\bigr)$
        }
        \color{black} \right\| %
        \le  \alpha < 1
    $$ 
    using the $L^{\infty}$-norm and evaluation at the analytic solution $x$. 
    The quantity $\alpha \in \real^+$ is referred to as contraction number. 
    For the type of norm employed on the above left-hand side, we use later the following short-hand
    \[
     \left\|\left(\frac{\partial G_\rho\hspace*{2ex}}{\partial x_\lambda^{(i+1)}}\right)^{-1} \frac{\partial G_\lambda}{\partial x_\tau^{(i)}}\right\|
     :=
     \max_{\bar{t} \,\,\le\,\, \tau \,\,\le\,\,\, \bar{t}+H} 
        \left\|
            \left. 
            \left(
         \frac{\partial G_\rho}{\partial x_\lambda^{(i+1)}} 
        \right)^{-1} \cdot
         \frac{\partial G_\lambda}{\partial x_\tau^{(i)}} 
        \right|_{}
        \raisebox{-4ex}{
        $
        \bigl(\tau,\, x(\tau),\, x(\tau)\bigr)$
        }
        \color{black} \right\| %
    \]
    (for $\rho, \lambda, \tau \in \{F,\,S\}, x \in \{y,z\}$).

    \item {\em Stable error propagation from window to window:} %
    Let us assume that $k$ iterations are performed on the current time window. Then 
     a sufficient condition for a stable error propagation from window to window is given by~\cite{Arnold_2001}
    $$
    L_\Phi \alpha^{k} < 1
    $$
    with Lipschitz constant $L_\Phi$ for the extrapolation operator. \color{black}
\end{enumerate}
\begin{remark} i) 
Notice that for the stable error propagation in b) it might be necessary that more than one iteration is performed, although the error reduction (i.e., $\alpha <1$) holds.

ii) If one employs a dynamic iteration with only one iteration (one solve of the DAEs), then a multirate scheme is obtained. These schemes are referred to as \emph{multirate co-simulation}, see \cite{bartel_diss}.
\end{remark}

As we did for the ODE case, interpolation/extrapolation based multirate schemes of convergence order $p$ for coupled index-1 DAEs can now be obtained by replacing the exact solution of the DAE system with splitting functions 
\begin{enumerate}[label=\roman*)]
    \item by a numerical integration of convergence order $p$, 
    
    \item with stopping after the first iteration (i.e.,  $k=1$), plus
    
    \item employing extrapolation/interpolation schemes of order $p-1$  and
    
    \item having satisfied the contractivity condition $L_\Phi \alpha<1$.
\end{enumerate}
For the different coupling strategies, this condition reads
\begin{enumerate}[label=\roman*)]
    \item fully-decoupled approach: 
    \begin{align*}
        L_\Phi
        \max_{\scriptscriptstyle\bar{t} \le\,\, \tau\,\, \le \bar{t}+H} 
        & 
        \left\|
        \setlength\arraycolsep{3pt}
        \begin{pmatrix}
            \frac{\partial G_S}{\partial z_S^{(i+1)}} & 0 \\[1.5ex]
            0 & \frac{\partial G_F}{\partial z_F^{(i+1)}} \\
        \end{pmatrix}^{\!\!\!-1} \cdot
          \begin{pmatrix}
         0 & \frac{\partial G_S}{\partial z_F^{(i)}} \\[1.5ex]
           \frac{\partial G_F}{\partial z_S^{(i)}} & 0 \\
        \end{pmatrix} 
        \right\|  
        \; <  1 
        \\
 & \Leftrightarrow   
 \max\limits_{\scriptscriptstyle\bar{t} \le\,\, \tau\,\, \le \bar{t}+H} 
        \left\|
        \setlength\arraycolsep{3pt}
        \begin{pmatrix}
             \left(\frac{\partial G_S}{\partial z_S^{(i+1)}}\right)^{-1} \frac{\partial G_S}{\partial z_F^{(i)}} & 0 \\[1.5ex]
           0 & \left(\frac{\partial G_F}{\partial z_F^{(i+1)}}\right)^{-1} \frac{\partial G_F}{\partial z_S^{(i)}}  \\
        \end{pmatrix}
         \right\|  
        \; < \frac{1}{L_\Phi}.
 \end{align*}
Sufficient conditions for this are  
 \begin{align*}
\left\| 
 \left(\frac{\partial G_S}{\partial z_S^{(i+1)}}\right)^{\!\!\!-1} \frac{\partial G_S}{\partial z_F^{(i)}} 
\right\| < \frac{1}{L_\Phi} \quad \mbox{and} \quad
\left\| \left(\frac{\partial G_F}{\partial z_F^{(i+1)}}\right)^{\!\!\!-1}  \frac{\partial G_F}{\partial z_S^{(i)}}
\right\| < \frac{1}{L_\Phi}.
\end{align*}    
Introducing the ratios of Lipschitz-constants: \color{black}
$$
\alpha_S:=\frac{L^{g_S}_{F}}{L^{g_S}_{S}}, \qquad \alpha_F:=    \frac{L^{g_F}_{S}}{L^{g_F}_{F}}  
$$
for $g_S$ and $g_F$ (see \eqref{eq:lipschitz.algebraic}), 
the last conditions can be reformulated as:  \color{black}
\begin{equation}\label{eq:lip.fully-decoupled}
    \textstyle 
    \alpha_S < \frac{1}{L_\Phi} \quad \mbox{and} \quad \alpha_F < \frac{1}{L_\Phi}.
\end{equation}

    \item \color{black} slowest-first approach: %
    \begin{align*} 
          \max_{\scriptscriptstyle\bar{t} \,\,\le \,\,\tau \,\,\le\,\, \bar{t}+H} 
          &
          \left\|
            \setlength\arraycolsep{3pt}
            \begin{pmatrix}
                \frac{\partial G_S}{\partial z_S^{(i+1)}} & 0 \\[1.5ex]
                \frac{\partial G_F}{\partial z_S^{(i+1)}} & \frac{\partial G_F}{\partial z_F^{(i+1)}} \\
            \end{pmatrix}^{\!\!\!-1} \cdot \setlength\arraycolsep{3pt}
            \begin{pmatrix}
                0 & \frac{\partial G_S}{\partial z_F^{(i)}} \\[1.5ex]
                0 & 0 \\
            \end{pmatrix} \right\|   
        <  1 
\\
& \Leftrightarrow
\max_{\scriptscriptstyle\bar{t} \,\,\le \,\,\tau \,\,\le\,\, \bar{t}+H} 
    \left\|
        \setlength\arraycolsep{5pt}
        \begin{pmatrix}
        0 & \left(\frac{\partial G_S}{\partial z_S^{(i+1)}}\right)^{\!\!\!-1} \frac{\partial G_S}{\partial z_F^{(i)}} \\
        0 &  \left(\frac{\partial G_F}{\partial z_F^{(i+1)}}\right)^{\!\!\!-1} \frac{\partial G_F}{\partial z_S^{(i+1)}}
\left(\frac{\partial G_S}{\partial z_S^{(i+1)}}\right)^{\!\!\!-1}  \frac{\partial G_S}{\partial z_F^{(i)}} \\
        \end{pmatrix}  \color{black} %
           \right\|   
        <  \frac{1}{L_\Phi}.
\end{align*}
For this, sufficient conditions are  \color{black}
\begin{align*}
& \textstyle 
\left\| 
 \left(\frac{\partial G_S}{\partial z_S^{(i+1)}}\right)^{\!\!\!-1} \frac{\partial G_S}{\partial z_F^{(i)}}  \right\| 
 < \frac{1}{L_\Phi} 
 \;\; \text{and } \;\;
\left\| \left(\frac{\partial G_F}{\partial z_F^{(i+1)}}\right)^{\!\!\!-1} \frac{\partial G_F}{\partial z_S^{(i+1)}}
\left(\frac{\partial G_S}{\partial z_S^{(i+1)}}\right)^{\!\!\!-1}  \frac{\partial G_S}{\partial z_F^{(i)}}
\right\| < \frac{1}{L_\Phi}.
\end{align*}
Formulated with ratios of Lipschitz-constants, we have \color{black}
\begin{equation}\label{eq:lip_slowest-first} 
 \textstyle 
 \alpha_S < \frac{1}{L_\Phi} \quad \mbox{and} \quad \alpha_F \alpha_S < \frac{1}{L_\Phi},
\end{equation}
which is equivalent to
\begin{equation}\label{eq:lip_slowest-first-2} 
 \textstyle 
 \alpha_S < \frac{1}{L_\Phi} \quad \mbox{and} \quad \alpha_F  < 1.
\end{equation}

    \item \color{black}
    fastest-first approach: %
    we obtain analogously to ii) \color{black}
        \begin{align*}
& 
    \left\| \setlength\arraycolsep{5pt}
       \begin{pmatrix}
        \left(\frac{\partial G_S}{\partial z_S^{(i+1)}}\right)^{\!\!\!-1} \frac{\partial G_S}{\partial z_F^{(i+1)}}
\left(\frac{\partial G_F}{\partial z_F^{(i+1)}}\right)^{\!\!\!-1}  \frac{\partial G_F}{\partial z_S^{(i)}}
           & 0 \\
         \left(\frac{\partial G_F}{\partial z_F^{(i+1)}}\right)^{\!\!\!-1} 
      \frac{\partial G_F}{\partial z_S^{(i)}} & 0 
      \end{pmatrix}
    \right\|  \; < \frac{1}{L_\Phi}.
       \\
\end{align*}
For this,  \color{black}
sufficient conditions for this are 
\begin{align*}
& \textstyle 
\left\| 
 \left(\frac{\partial G_F}{\partial z_F^{(i+1)}}\right)^{\!\!\!-1} 
       \frac{\partial G_F}{\partial z_S^{(i)}} 
\right\| < \frac{1}{L_\Phi} \; \text{and} \;
 \left\| \left(\frac{\partial G_S}{\partial z_S^{(i+1)}}\right)^{\!\!\!-1} \frac{\partial G_S}{\partial z_F^{(i+1)}}
\left(\frac{\partial G_F}{\partial z_F^{(i+1)}}\right)^{\!\!\!-1}  \frac{\partial G_F}{\partial z_S^{(i)}}
\right\| < \frac{1}{L_\Phi}.
\end{align*}
In ratios of Lipschitz-constants, this reads \color{black}
\begin{equation}\label{eq:lip_fastest-first}
\textstyle
\alpha_F <  \frac{1}{L_\Phi} \quad \mbox{and} \quad \alpha_S \alpha_F < \frac{1}{L_\Phi},
\end{equation}
which is equivalent to
\begin{equation}\label{eq:lip_fastest-first-2}
 \textstyle 
 \alpha_F <  \frac{1}{L_\Phi} \quad \mbox{and} \quad \alpha_S  < 1.
\end{equation}
\end{enumerate}
\color{black}
In all cases, convergence is given for problems that are coupled weakly enough, i.e., the respective above estimates for $L_\Phi \alpha <1$ hold\color{black}. If not, additional iteration of the multirate scheme will be necessary. This will, in fact, destroy the multirate benefit. 

\begin{remark}
One shall notice \color{black} that the stability criteria are relaxed if the multirate scheme is not fully decoupled: a larger fast ratio $\alpha_F$ is allowed in the case of slowest-first approach, and a larger slow ratio $\alpha_S$ is in the case of fastest-first approach.
\end{remark}

Summing up, we have

\begin{theorem}
Given the split DAE problem \eqref{eq:DAE-split-formulation} with the index-1 conditions
for the overall system and the subsystems~\eqref{eq:cosim-index-one-conditions}\color{black}. The above variants of multirate methods based on dynamic iteration 
are convergent on the macro step level of order $p$ if
\begin{enumerate}[label=\alph*)]
    \item the respective basic integration schemes are of order $p$,
    \item the applied inter-/ extrapolation procedures are of  order $p-1$, and 
    \item the respective step size restriction
\begin{center}
i) fully-decoupled: \eqref{eq:lip.fully-decoupled},
\qquad
ii) slowest-first: \eqref{eq:lip_slowest-first-2},  
\qquad
iii) fastest-first: \eqref{eq:lip_fastest-first-2},
\end{center}
\end{enumerate}
are satisfied. The latter conditions guarantee stability. 
\end{theorem}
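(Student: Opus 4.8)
The plan is to cast the statement as a standard convergence result built from two independent pieces: a \emph{consistency} estimate of order $p+1$ on a single macro window $[\bar t,\,\bar t+H]$, and a \emph{stable error propagation} from window to window, and then to close the usual one-step recursion (local errors of order $p+1$ summed over $\mathcal{O}(1/H)$ windows under a stable propagation yield global order $p$). First I would, exactly as in the proof of Theorem~\ref{thm:general-multirate} and its Corollaries~\ref{thm:general-multirate2}--\ref{thm:general-multirate3}, freeze one macro step and replace the coupling variables in~\eqref{eq:DAE-split-formulation} by the extrapolated/interpolated surrogates of order $p-1$ prescribed by the chosen splitting functions $F_\lambda,\,G_\lambda$; this yields a decoupled modified DAE whose exact solution I denote with the hat-notation, and I would split the total error into the order-$p$ integration error of the decoupled DAE plus the distance between the decoupled and the truly coupled solution, in direct analogy to~\eqref{eq:split-error-general}.

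For the differential unknowns $\yslow,\,\yfast$ the second piece is handled verbatim as in the ODE corollaries: the Lipschitz bounds~\eqref{eq:lipschitz.differential} together with Gronwall's lemma turn the order-$(p-1)$ coupling defect (of size $\mathcal{O}(H^p)$) into an $\mathcal{O}(H^{p+1})$ contribution after integration over the window. The genuinely new ingredient is the algebraic part. Here I would invoke the index-1 regularity~\eqref{eq:cosim-index-one-conditions} and solve the constraints $G_S=G_F=0$ for $\zslow,\,\zfast$ by the implicit function theorem; the resulting error in the algebraic variables, caused by evaluating the constraints at the extrapolated rather than the coupled coupling data, is governed to leading order by the amplification matrix $(\partial G/\partial z^{(i+1)})^{-1}(\partial G/\partial z^{(i)})$, whose supremum norm is exactly the contraction number $\alpha$ introduced above. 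In other words the algebraic defect is not smoothed by an integration the way the differential defect is; it is instead scaled by $\alpha$ and transported to the next window.

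The hard part is therefore not the local consistency but the \emph{propagation} of this algebraic error, and this is precisely where the step-size restrictions enter. Stopping the dynamic iteration after the first sweep ($k=1$), the window-to-window map is the extrapolation operator (Lipschitz constant $L_\Phi$) composed with one contraction, so its amplification factor is $L_\Phi\,\alpha$; requiring $L_\Phi\,\alpha<1$ makes the induced recursion a contraction and the accumulated algebraic error a convergent geometric series. I would then specialise this single condition to the three coupling structures via the block computations already carried out, obtaining the sufficient forms~\eqref{eq:lip.fully-decoupled} for the fully-decoupled, \eqref{eq:lip_slowest-first-2} for the slowest-first and~\eqref{eq:lip_fastest-first-2} for the fastest-first approach. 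Finally I would assemble the differential and algebraic contributions and run the standard summation of the local errors of order $p+1$ over the $\mathcal{O}(1/H)$ macro steps under this stable propagation, which delivers global convergence of order $p$ on the macro grid and hence the claim.
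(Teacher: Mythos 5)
Your proposal is correct and follows essentially the same route as the paper, which states this theorem as a summary (``Summing up'') of the preceding material: the ODE-type Gronwall/consistency analysis of Theorem~\ref{thm:general-multirate} and its corollaries for the differential part, combined with the dynamic-iteration contraction theory of \cite{Jackiewicz_1996,Arnold_2001} specialised to a single sweep ($k=1$), so that $L_\Phi\alpha<1$ governs the algebraic error propagation, with the three block computations yielding \eqref{eq:lip.fully-decoupled}, \eqref{eq:lip_slowest-first-2} and \eqref{eq:lip_fastest-first-2}. You in fact supply more detail than the paper does (the implicit-function-theorem treatment of the constraints and the explicit geometric-series closure), but the decomposition, the key contraction condition and the final summation argument are the same.
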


\begin{remark}
In the special case of DAE-ODE coupling, $G_S$ and $G_F$ do not 
depend on old iterates of the algebraic variables; hence $\alpha=0$, and convergence can always be guaranteed
for $H$ small enough. 
For the case, where the fast system is an ODE, and implicit Euler approaches are used, explicit conditions for convergence are given in~\cite{HBGS_2019} 
and read in our notation: 
$$
H < \frac{1}{M^{f_S}_{S}\,+\, L^{f_S}_S\, M^{g_S}_{S}}, 
\quad
h <  \frac{1}{M^{f_F}_S \,+\, L^{f_F}_S \, M^{g_S}_F }
$$
We note that this conditions  are quite strong assumptions in the case of stiff equations.  
\end{remark}

\begin{remark}[Schemes]
Compared with the ODE case, the first order extrapolation needs Jacobian information for the $G$-parts. In fact, this is needed for an implicit integration scheme anyways.
\end{remark}

\section{Conclusion and outlook}

The presented work contains a full convergence theory for the quite straightforward approach of inter/extrapolation-based multirate schemes for both the ODE and index-1 DAE case. By linking our theory to the concept of multirate dynamic iteration schemes, we obtained strong stability restrictions for stiff differential equations. As these conditions are sufficient ones, 
one-sided Lipschitz-conditions might yield more realistic results. This will be investigated in future work.


%
%

\bigskip


\begin{thebibliography}{99}
%
%
\bibitem{Arnold_2007}
Arnold, M.: Multi-Rate Time Integration for Large Scale Multibody System Models.
In: P. Eberhard  (ed), IUTAM Symposium on Multiscale Problems in Multibody System Contacts, Springer Netherlands, 2007, 1--10.

\bibitem{Arnold_2001}
Arnold, M., G\"unther, M.: Preconditioned Dynamic Iteration for Coupled Differential-Algebraic Systems. BIT {\bfseries 41}:1 (2001), 1--25.

\bibitem{Bartel_2001}
Bartel, A.:  Multirate ROW Methods of Mixed Type for Circuit Simulation.
In: van Rienen, U., G\"unther, M., Hecht, D. (eds.): Scientific Computing in
Electrical Engineering. Lecture Notes in Computational Science and Engineering
{\bfseries 18}, Springer-Verlag, Berlin, 2001, 241--249.

\bibitem{bartel_diss}
Bartel, A.: {Partial Differential-Algebraic Models in Chip Design—Thermal and Semiconductor Problems}, VDI-Verlag, D\"usseldorf, Germany, 2004.



\bibitem{Bartel_Guenther_2019}
Bartel, A., G{\"u}nther, M.: Multirate Schemes --- An Answer of Numerical Analysis to a Demand
from Applications. IMACM Preprint, No. 2019-12, University of Wuppertal.

\bibitem{Constantinescu2007}
Constantinescu, E.M.,  Sandu, A.:
Multirate Timestepping Methods for Hyperbolic Conservation Laws.
Journal of Scientific Computing, {\bfseries 33}:3, 239--278 (2007).

\bibitem{Constantinescu_2010}
Constantinescu, E.M., Sandu, A.:
Extrapolated Implicit-Explicit Time Stepping.
SIAM J. Scientific Computing {\bfseries 31},
4452--4477 (2010) 

\bibitem{El_Guennouni_2007}
El Guennouni, A., Verhoeven, A.,  ter Maten, E.J.W., and  Beelen T.G.J.:
Aspects of Multirate Time Integration Methods in Circuit Simulation Problems.
In: A. Di Bucchianico, R.M.M. Mattheij, M.A. Peletier (eds.), 
Progress in Industrial Mathematics at ECMI 2004, Springer, 2006,  

\bibitem{Engstler_1997}
Engstler, C.  Lubich, C.:
Multirate extrapolation methods for differential equations with different time scales.
Computing {\bfseries 58}:2, 173--185 (1997) 

\bibitem{Eich1998}
Eich-Soellner, E., F\"uhrer, C: {Numerical Methods in Multibody Dynamics, Teubner}, Stuttgart, 1998.

\bibitem{Gear_1984}
Gear, C.W., Wells, D.: {{Multirate linear multistep methods}}. {BIT}
  {\bfseries 24}, {484--502}  (1984).
  
\bibitem{GuentherFeldmannMaten2005}
G\"unther, M., Feldmann, U., ter Maten, J.:
{Modelling and Discretization of Circuit Problems},
Handbook of Numerical Analysis {\bfseries 13}, Elsevier, {523--659} (2005).

\bibitem{Guenther_2001}
G\"unther, M., Kv{\ae}rn{\o}, A. Rentrop, P.:
Multirate Partitioned Runge-Kutta Methods.
BIT Numerical Mathematics {\bfseries 41}:3,
504--514 (2001).

\bibitem{GuentherSandu_2015}
G\"unther M., Sandu, A.:
{Multirate generalized additive Runge Kutta methods}  Numerische Mathematik {\bfseries 133} (2016), 497--524. 

\bibitem{GuentherRentrop_1992}
G\"unther M., Rentrop P.: {Multirate ROW methods and latency of electric circuits}, 
Appl. Num. Math. {\bfseries 13} (1992), 83--102.

\bibitem{HBGS_2019} Hachtel, Ch., Bartel, A., G\"unther, M., Sandu, A.:
Multirate Implicit Euler Schemes for a Class of
Differential-Algebraic Equations of Index-1. Accepted at JCAM.


\bibitem{Schoeps_2010a}
Sch\"ops, S., De Gersem, H., Bartel, A.:
A Co-Simulation Framework for Multirate Time-Integration of Field/Circuit Coupled Problems.
IEEE Trans. Magn., {\bfseries 46}:8, pp. 3233--3236  (2010).

\bibitem{Stocker2011}
Stocker, Th.: {Introductionto Climate Modelling}, Springer, Heidelberg (2011).

\bibitem{Jackiewicz_1996}
 Z. Jackiewicz and M. Kwapisz, {\em Convergence of waveform relaxation methods for differential-algebraic systems}, SIAM J. Numer. Anal. \textbf{33}, 2303-–2317 (1996).

\bibitem{Rice60}
Rice, J.R.: {Split Runge-Kutta method for simultaneous equations}, J. Res. Nat. Bur. Standards, 64B,  151--170 (1960).

\bibitem{Savcenco_2007}
Savcenco, V., Hundsdorfer. W., Verwer, J.G.:
{A multirate time stepping strategy for stiff ordinary differential equations},  
BIT {\bfseries 47} (2007), 137--155, 579--584.



\bibitem{Verhoeven_2008}
Verhoeven, A., Tasi{\'c}, B.,  Beelen, T.G.J., ter Maten, E.J.W.,  Mattheij, R.M.M.: BDF compound-fast multirate transient analysis with adaptive stepsize control, J. of Numerical Analysis, Industrial and Applied Mathematics {\bfseries 3}:3-4, 275--297 (2008). 

\bibitem{Wensch_2009}
Wensch, J., Knoth, O., Galant, A.:
Multirate infinitesimal step methods for atmospheric flow simulation.
BIT Numerical Mathematics {\bfseries 49}:2, 449--473 (2009).

\end{thebibliography}
\end{document}